\documentclass[11pt, a4paper]{article}

\usepackage{amsmath, amssymb, amsthm, xspace, a4wide, amscd}
\usepackage[english]{babel}
\usepackage[T1]{fontenc}
\usepackage[latin1]{inputenc}

\textwidth=17cm \textheight=25cm

\voffset=-2cm

\sloppy

\newtheorem{thm}{Theorem}
\newtheorem{lemma}{Lemma}

\theoremstyle{definition}
\newtheorem{defin}{Definition}

\theoremstyle{remark}
\newtheorem{example}{Example}
\newtheorem{assumption}{Assumption A\hspace*{-2pt}}
\newtheorem*{remark}{Remark}

\newcommand{\ii}{\ensuremath{\mathbf{1}}}
\newcommand{\rr}{\ensuremath{\mathbb{R}}}

\newcommand{\xx}{{\sf{X}}}
\newcommand{\pp}{{\sf{P}}}
\newcommand{\mm}{{\sf{m}}}
\newcommand{\llf}{{\sf{L}}}
\newcommand{\bb}{\mathcal{B}}
\newcommand{\ff}{\mathcal{F}}

\newcommand{\aaa}{{\sf A}}

\newcommand{\rp}{\stackrel{{\sf P}}{\to}}

\newcommand{\ee}{{\sf E}}

\begin{document}

\begin{center}\Large
Regularity of paths of stochastic measures
\end{center}

\begin{center}
Vadym Radchenko
\footnote{Department of Mathematical Analysis, Taras Shevchenko National University of Kyiv, 
01601 Kyiv, {Ukraine},\\ vadymradchenko@knu.ua}
\end{center}

\begin{abstract}
Random functions $\mu(x)$, generated by values of stochastic measures are considered. The Besov regularity of the continuous paths of $\mu(x)$, $x\in[0,1]^d$ is proved. Fourier series expansion of $\mu(x)$, $x\in[0,2\pi]$ is obtained. These results are proved under weaker conditions than similar results in previous papers.
\end{abstract}

\section{Introduction}\label{scintr}

In this paper, we consider the properties of paths of processes generated by values of stochastic measures (SMs). SM is a stochastic set function that is $\sigma$-additive in probability, see exact definition and examples in subsection~\ref{ss:stochm}.

Besov regularity of paths of SMs was studied in \cite{rads06} and \cite{radt09}, Fourier series expansions of paths of SMs were obtained in~\cite{radt18}. Some important results in these papers were obtained under the following condition for SM $\mu$ defined on measurable space $(\xx, \bb)$.

\begin{assumption}\label{asssq}
There exists a real-valued finite measure~$\mm$
on~$(\xx, \bb)$ with the following property: if a measurable function $g:\xx\to\rr$ be such that $\int_{\xx}g^2\,d\mm<+\infty$ then $g$ is integrable w.~r.~t.~$\mu$ on~$\xx$.
\end{assumption}

In this paper, we prove that some statements from \cite{radt09} and~\cite{radt18} remain valid if we assume the following condition instead of A\ref{asssq}.

\begin{assumption}\label{assexp}
There exists a real-valued finite measure~$\mm$
on~$(\xx, \bb)$ with the following property: if a measurable function $g:\xx\to\rr$ is such that for all $\lambda\in\rr,\ \lambda>0$ holds
$\int_{\xx}2^{\lambda |g|}\,d\mm<+\infty$ then $g$ is integrable w.~r.~t.~$\mu$ on~$\xx$.
\end{assumption}

Obviously, condition A\ref{assexp} is weaker than A\ref{asssq}.

Our proofs are very similar to respective proofs in~\cite{radt09} and~\cite{radt18} with one essential change -- now we use Lemma~\ref{lm:fkmu} from the present paper. In the previous publications, the statement of Lemma~\ref{lm:fkmu} was obtained under condition A\ref{asssq}, in our paper, we assume A\ref{assexp}. Note that in many of our results, we assume that the paths of our processes are continuous.

The rest of the paper is organised as follows. In Section~\ref{sc:prel} we recall the basic facts concerning SMs and Besov spaces and give a short literature review. In Section~\ref{sc:auxlem} we prove Lemma~\ref{lm:fkmu} and other auxiliary lemmas. Section~\ref{sc:besovr} contains the result about the Besov regularity of SMs on $[0,1]^d$. In Section~\ref{sc:four} we study the Fourier series defined by process $\mu(t)=\mu((0,t])$, $t\in[0,2\pi]$.

\section{Preliminaries}\label{sc:prel}

\subsection{Stochastic measures}\label{ss:stochm}

In this subsection, we give basic information concerning stochastic measures in a general setting. In statements of Sections~\ref{sc:besovr} and \ref{sc:four}, this set function is defined on Borel subsets of $[0,1]^d$ or $[0,2\pi]$.

Let $\llf_0=\llf_0(\Omega, {\ff}, {\pp})$ be the set of all real-valued
random variables defined on the complete probability space $(\Omega, {\ff}, {\pp} )$. Convergence in $\llf_0$ means the convergence in probability. Let ${\xx}$ be an arbitrary set and ${\bb}$ a $\sigma$-algebra of subsets of ${\xx}$.

\begin{defin}\label{def:stme}
A $\sigma$-additive mapping $\mu:\ {\bb}\to \llf_0$ is called {\em stochastic measure} (SM).
\end{defin}

We do not assume the moment existence or martingale properties for SM. We can say that $\mu$ is an $\llf_0$--valued measure.

We note the following examples of SMs. The orthogonally scattered stochastic measures are SMs with values in ${\llf}_2(\Omega,\ff,\pp)$. The $\alpha$-stable random measures defined on a $\sigma$-algebra for $\alpha\in (0,1)\cup(1,2]$ are independently scattered SMs, see \cite[Chapter 3]{samtaq}, Definition~\ref{def:stme} holds by \cite[Proposition 3.5.1]{samtaq}.

Many examples of the SMs on the Borel subsets of $[0,T]$ may be given by the Wiener-type integral $\mu({\aaa})=\int_{[0,T]} {\mathbf 1}_{\aaa}(t)\,dX_t$.
For example, this holds if $X_t$ be any square integrable martingale or fractional Brownian motion with Hurst index $H>1/2$. Other examples may be found in~\cite[Section 2.1]{manrad24} or~\cite[Section 1.2.1]{radbook}.

For deterministic measurable functions $f:\xx\to\rr$, an integral of the form $\int_{\xx}f\,d\mu$ is studied
in~\cite[Chapter 7]{kwawoy}, ~\cite[Chapter 1]{radbook}. In particular, every bounded measurable $f$ is integrable with respect to (w.r.t.) any~$\mu$, and for any integrable $g$ holds
\begin{equation}\label{eq:limf}
\lim_{c\to +\infty}\sup_{|h|\le |g|}\pp\Bigl\{\Bigl|\int_{\xx}h\,d\mu\Bigr|\ge c\Bigr\} = 0
\end{equation}
(this follows from Corollary 1.1, Lemma~1.8, and Theorem~1.2 ~\cite{radbook}). An analogue of the Lebesgue dominated
convergence theorem holds for this integral (see~\cite[Proposition 7.1.1]{kwawoy} or~\cite[Theorem 1.5]{radbook}).

The theory of SMs in detail is considered in~\cite{radbook}. Equations driven by SMs are studied, for example, in \cite{bodnarchuk20}, \cite{manikin24}, \cite{shen20}. SMs may be used for the study of stochastic dynamical systems (see~ \cite{bai22}, \cite{baiowwa20}).

In the sequel, $\bb(\xx)$ denote the Borel $\sigma$-algebra of subsets of $\xx$, $\mm_L$ denote the Lebesgue measure.

\subsection{Besov spaces}\label{ss:besovsp}

These classical functional Banach spaces found many applications in mathematical physics, function theory, and functional analysis. We recall the definition of Besov spaces following~\cite{kamont}.

For functions $f\in \llf_p([0, 1]^d)=\llf_p([0, 1]^d,\mm_L)$ we set
\begin{equation*}
\|f\|_{B^\alpha_{p,q}([0, 1]^d)}=\|f\|_{\llf_{p}([0, 1]^d)}+\Bigl(\int_0^{1} (\omega_p(f, r))^q r^{-\alpha q-1}
\,dr\Bigr)^{1/q},
\end{equation*}
where $\omega_p$ denotes the $\llf_p$-modulus of continuity,
\begin{eqnarray*}
\omega_p(f, r)  =  \sup_{|h|\le r}\Bigl(\int_{I_{h}} |f(x+h)-f(x)|^p\,dx\Bigr)^{1/p},\\
I_{h}  =  \{x\in [0, 1]^d:\ x+h\in [0, 1]^d \},
\end{eqnarray*}
$|h|$ denotes the Euclidean norm in $\rr^d$. Then
\[
B^\alpha_{p,q}([0, 1]^d)=\{f\in \llf_p([0, 1]^d):\ \|f\|_{B^\alpha_{p,q}([0, 1]^d)}<+\infty\},
\]
and $\|\cdot\|_{B^\alpha_{p,q}([0, 1]^d)}$ is a norm in this space.

Besov regularity of trajectories of random processes is well-studied, see, for example, \cite{cikero}, \cite{fageot17}, \cite{herren}. For Besov--Orlicz spaces, similar results are obtained in \cite{ondver20}, \cite{coupond24}. Besov regularity and continuity of paths of multidimensional integral w.r.t. SMs are considered in~\cite{manrad24}.

\section{Auxiliary lemmas}\label{sc:auxlem}

In the first statement, we recall the well-known Paley--Zygmund inequality (see, for example, Lemma~4.3(a)~\cite{vahtar} or Lemma~2.1~\cite{radbook}).

\begin{lemma}\label{lm:pz}
Let $\varepsilon_{k},\ 1\le k\le m,$ be independent Bernoulli random variables, \[
\pp[\varepsilon_{k}=1]=\pp[\varepsilon_{k}=-1]=1/2.
\]
Then for each $\lambda_k\in\rr$
\begin{equation}\label{eq:pzcr}
\pp\Bigl[\Bigr(\sum_{k=1}^{m} \lambda_{k} \varepsilon_{k}\Bigr)^2\ge\frac{1}{4}\sum_{k=1}^{m} \lambda_{k}^2
\Bigr]\ge\frac{1}{8}.
\end{equation}
\end{lemma}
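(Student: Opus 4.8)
The plan is to apply the Paley–Zygmund inequality in its standard form to the nonnegative random variable $Z=\bigl(\sum_{k=1}^m \lambda_k\varepsilon_k\bigr)^2$. First I would compute the relevant moments. Since the $\varepsilon_k$ are independent with mean zero and $\varepsilon_k^2=1$, we have $\ee Z = \sum_{k=1}^m \lambda_k^2 =: S$. For the fourth moment, expanding $Z^2 = \bigl(\sum_k \lambda_k\varepsilon_k\bigr)^4$ and using independence and $\ee\varepsilon_k=\ee\varepsilon_k^3=0$, only the terms $\lambda_k^4\varepsilon_k^4$ and the mixed terms $\binom{4}{2}\lambda_i^2\lambda_j^2\varepsilon_i^2\varepsilon_j^2$ (for $i\ne j$) survive, giving $\ee Z^2 = \sum_k \lambda_k^4 + 3\sum_{i\ne j}\lambda_i^2\lambda_j^2 \le 3\bigl(\sum_k\lambda_k^2\bigr)^2 = 3S^2$.

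Next I would invoke the one-line Paley–Zygmund bound: for a nonnegative random variable $Z$ with $0<\ee Z<\infty$ and any $\theta\in[0,1]$,
\[
\pp[Z \ge \theta\,\ee Z] \ge (1-\theta)^2\,\frac{(\ee Z)^2}{\ee Z^2}.
\]
This itself follows from writing $\ee Z = \ee[Z\,\ii_{\{Z<\theta\ee Z\}}] + \ee[Z\,\ii_{\{Z\ge\theta\ee Z\}}] \le \theta\,\ee Z + \sqrt{\ee Z^2}\sqrt{\pp[Z\ge\theta\ee Z]}$ by Cauchy–Schwarz, then rearranging. Taking $\theta = 1/4$ yields
\[
\pp\Bigl[Z \ge \tfrac14 S\Bigr] \ge \Bigl(\tfrac34\Bigr)^2\frac{S^2}{\ee Z^2} \ge \frac{9}{16}\cdot\frac{1}{3} = \frac{3}{16} \ge \frac18,
\]
which is exactly \eqref{eq:pzcr}. (If $S=0$ all $\lambda_k$ vanish and the inequality is trivial.)

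There is no serious obstacle here; this is a standard second-moment argument. The only point requiring a little care is the fourth-moment computation — keeping track of the multinomial coefficient $3$ in front of the cross terms and bounding $\sum_k\lambda_k^4 + 3\sum_{i\ne j}\lambda_i^2\lambda_j^2$ by $3S^2$ rather than the naive $S^2$ — and then checking that the resulting constant $3/16$ comfortably exceeds the claimed $1/8$, so the stated (slightly weaker) bound holds with room to spare.
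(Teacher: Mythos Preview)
Your argument is correct: the fourth-moment computation gives $\ee Z^2 = 3S^2 - 2\sum_k\lambda_k^4 \le 3S^2$, and the Paley--Zygmund bound with $\theta=1/4$ then yields $3/16\ge 1/8$, exactly as you wrote. There is nothing to compare against in the paper itself, since the lemma is simply quoted there as a well-known fact (with references to Lemma~4.3(a) of \cite{vahtar} and Lemma~2.1 of \cite{radbook}) and no proof is given; your proposal supplies precisely the standard second-moment proof that those references contain.
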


The following lemma is the main result of this section. Assuming A\ref{asssq} a similar statement was obtained in Lemma~3.3~\cite{radt09}.

\begin{lemma} \label{lm:fkmu} Assume that $\mu$ be an SM on $(\xx,\bb)$ and Assumption~A\ref{assexp} holds.

Let the measurable functions $f_k:\ {\xx}\to \rr,\ k\ge 1$, be such that
\begin{equation} \label{eq:sumf}
\sup_{x\in\xx}\sum_{k=1}^{\infty} ({f_k}(x))^2<\infty.
\end{equation}
Then
\begin{equation}
\label{eqinfk} \sum_{k=1}^{\infty} \Bigl(\int_\xx f_k\,d\mu\Bigr)^2< +\infty\quad\mbox{\textrm ~a.~s.}
\end{equation}
\end{lemma}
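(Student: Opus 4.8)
The plan is to argue by contradiction using the Paley--Zygmund inequality (Lemma~\ref{lm:pz}) together with the uniform-smallness property \eqref{eq:limf} of the integral, exactly in the spirit of Lemma~3.3 of~\cite{radt09}, but replacing the step that invoked A\ref{asssq} with one that invokes A\ref{assexp} via a suitable randomization. Suppose \eqref{eqinfk} fails on a set of positive probability. Then on that set the partial sums $S_m = \sum_{k=1}^m \bigl(\int_\xx f_k\,d\mu\bigr)^2$ tend to $+\infty$, so for a sequence $c_n\to+\infty$ we can find, along a subsequence, indices $m_n$ with $\pp\{S_{m_n}\ge 4 c_n\}$ bounded below by some $\delta>0$. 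The goal is to combine the random signs with the functions $f_k$ to produce a \emph{single} function $g$ that is integrable w.r.t. $\mu$ by A\ref{assexp}, but whose integral cannot have the required tail decay, contradicting \eqref{eq:limf}.

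Concretely, introduce on an enlarged probability space a sequence $\varepsilon_k$ of i.i.d. Bernoulli$(\pm1)$ variables independent of $\mu$, and for fixed signs consider the partial-sum functions $h_m = \sum_{k=1}^m \varepsilon_k f_k$. By \eqref{eq:sumf} we have $\sup_x \sum_k f_k(x)^2 =: C<\infty$, so $|h_m(x)|\le (\text{something})$—more precisely one does \emph{not} get a uniform bound on $h_m$, and this is exactly where A\ref{assexp} enters: the point is to choose the signs randomly and estimate, for a fixed realization, the exponential integral $\int_\xx 2^{\lambda|h_m|}\,d\mm$. Here I would use a Gaussian-type (sub-Gaussian) tail bound for the Rademacher sum $\sum_{k=1}^m \varepsilon_k f_k(x)$ at each point $x$: since its variance proxy is $\sum_k f_k(x)^2\le C$ uniformly in $x$ and $m$, Hoeffding's inequality gives $\ee\,2^{\lambda|h_m(x)|}\le 2\exp(c\lambda^2 C)$ for a universal constant $c$, uniformly in $x$ and $m$. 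Integrating over $\xx$ against the finite measure $\mm$ and then taking $\ee$ (Fubini/Tonelli, all integrands nonnegative), we get $\ee\int_\xx 2^{\lambda|h_m|}\,d\mm \le 2\,\mm(\xx)\exp(c\lambda^2 C)<\infty$ for every $\lambda>0$. Hence for a.e. realization of the signs, $\int_\xx 2^{\lambda|h_m|}\,d\mm<\infty$ for all rational $\lambda>0$ and all $m$; however one must be careful that A\ref{assexp} needs this for \emph{all} real $\lambda>0$ simultaneously for a single function, so I would actually fix one sequence of signs for which the bound holds, pass to a limiting function $h=\sum_k \varepsilon_k f_k$ (convergent $\mm$-a.e. and in $\llf^2(\mm)$ by orthogonality of the terms, using $\sum f_k(x)^2\le C$), and check $\int_\xx 2^{\lambda|h|}\,d\mm<\infty$ for all $\lambda>0$ by Fatou together with the uniform-in-$m$ bound. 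Then A\ref{assexp} makes $h$ integrable w.r.t. $\mu$.

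Now the endgame: for the integrable function $h$, property \eqref{eq:limf} applied with $g=h$ (dominating all the truncations $h_m$, since $|h_m|\le$ ... — actually one applies \eqref{eq:limf} with an appropriate dominating function, e.g. $g=h$ after noting $|h_m|$ need not be $\le|h|$, so instead use $g_m := \sum_{k\le m}|f_k|$ or just invoke the dominated convergence theorem to get $\int_\xx h_m\,d\mu\to\int_\xx h\,d\mu$ in probability, hence the sequence $\int_\xx h_m\,d\mu$ is bounded in probability) forces $\sup_m \pp\{|\int_\xx h_m\,d\mu|\ge c\}\to0$ as $c\to\infty$. On the other hand, by Lemma~\ref{lm:pz} applied conditionally on $\mu$ with $\lambda_k=\int_\xx f_k\,d\mu$, we have $\pp\{(\int_\xx h_m\,d\mu)^2\ge \tfrac14 S_m\}\ge\tfrac18$; combined with $\pp\{S_{m_n}\ge4c_n\}\ge\delta$ and independence of $\varepsilon$ from $\mu$, this yields $\pp\{|\int_\xx h_{m_n}\,d\mu|\ge\sqrt{c_n}\}\ge\delta/8$ for all $n$, with $\sqrt{c_n}\to\infty$ — contradicting the uniform tail decay just established. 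This contradiction proves \eqref{eqinfk}.

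The main obstacle I anticipate is the bookkeeping around A\ref{assexp}: it requires the exponential integrability $\int 2^{\lambda|g|}d\mm<\infty$ to hold for \emph{all} $\lambda>0$ for one fixed $g$, whereas the randomization naturally produces, for a.e. sign sequence, a bound that is finite for each fixed $\lambda$ — interchanging ``a.e.\ $\omega$'' and ``all $\lambda$'' has to be done by extracting a single good realization via a countable dense set of $\lambda$'s plus monotonicity in $\lambda$, and by controlling the $m\to\infty$ limit with Fatou so that the \emph{limit} function $h$ (not just the truncations) satisfies the hypothesis of A\ref{assexp}. A secondary technical point is justifying $\int_\xx h_m\,d\mu\to\int_\xx h\,d\mu$ in probability so that $\{\int_\xx h_m\,d\mu\}_m$ is stochastically bounded; this should follow from the Lebesgue dominated convergence theorem for the integral w.r.t.\ $\mu$ quoted in the Preliminaries, with dominating function $\sum_{k\ge1}|f_k|\cdot(\text{indicator weights})$ — or more cleanly, by applying \eqref{eq:limf} to the integrable function $h$ after noting $h-h_m\to0$ pointwise with $|h-h_m|\le 2\sum_{k}|f_k|$, hence is dominated, so $\int(h-h_m)\,d\mu\to0$ in probability.
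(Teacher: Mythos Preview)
Your overall architecture is exactly the paper's: randomize with Rademacher signs, use sub-Gaussianity to verify the hypothesis of A\ref{assexp} for the randomized function, then combine Paley--Zygmund with \eqref{eq:limf} to reach a contradiction. Your handling of the ``all $\lambda$'' issue via Hoeffding's sub-Gaussian bound $\ee'\,2^{\lambda|h_m(x)|}\le 2\exp(c\lambda^2 C)$, together with Fatou and a countable set of $\lambda$'s, is correct and in fact cleaner than what the paper writes.

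There is, however, a genuine gap at the domination step. You need either $\sup_m\pp\{|\int h_m\,d\mu|\ge c\}\to 0$ or $\int h_m\,d\mu\to\int h\,d\mu$ in probability, and both require a \emph{single} $\mu$-integrable dominant for all the $h_m$. Neither of your candidates works: $|h_m|\le|h|$ fails, and $\sum_k|f_k|$ need not satisfy A\ref{assexp} (take $f_k(x)\equiv 1/k$: then $\sum f_k^2\le C$ but $\sum|f_k|=+\infty$, so no exponential moment exists). The paper's fix is to use the maximal function
\[
\zeta(x,\omega')=\sup_{m\ge 1}\Bigl|\sum_{k=1}^m \varepsilon_k(\omega')f_k(x)\Bigr|,
\]
and to transfer the sub-Gaussian tail bound from $h$ to $\zeta$ via L\'evy's maximal inequality for symmetric sums, $\pp'\{\zeta(x)>t\}\le 2\,\pp'\{|h(x)|>t\}$. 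This gives $\ee'\,2^{\lambda\zeta(x)}\le 2\,\ee'\,2^{\lambda|h(x)|}-1<\infty$ uniformly in $x$ for every $\lambda>0$, hence (Fubini, then your countable-$\lambda$ trick) $\int_\xx 2^{\lambda\zeta}\,d\mm<\infty$ for all $\lambda>0$ for $\pp'$-a.e.\ $\omega'$, so $\zeta(\cdot,\omega')$ is $\mu$-integrable by A\ref{assexp}. Since $|h_m|\le\zeta$ for every $m$, the dominated convergence theorem now gives $\int h_m\,d\mu\to\int h\,d\mu$ in $\pp$-probability, and \eqref{eq:limf} with $g=\zeta$ yields the uniform tail decay you need for the contradiction. (A minor aside: your claim that $h_m\to h$ in $L^2(\mm)$ ``by orthogonality'' is not right for a \emph{fixed} sign realization---the $\varepsilon_k(\omega')$ are just $\pm1$---but you only need $\mm$-a.e.\ convergence, which does hold by Fubini and the three-series theorem.)
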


\begin{proof}
Consider independent Bernoulli random variables $\varepsilon_{k},\ k\ge 1,$ defined on some other probability
space $(\Omega', \ff',\pp')$,
\[
\pp'[\varepsilon_{k}(\omega')=1]=\pp'[\varepsilon_{k}(\omega')=-1]=1/2.
\]

Consider the sum
\begin{equation}
\label{eqetxo} \eta(x, \omega')=\sum_{k=1}^{\infty} \varepsilon_{k}(\omega') f_k(x).
\end{equation}

The well-known two--series theorem and condition~\eqref{eq:sumf} imply that for each such~$x$ series~\eqref{eqetxo} converges $\pp'$-a.~s. on~$\Omega'$.

Denote
\[
C_f=\sup_{x\in\xx}\sum_{k=1}^{\infty} ({f_k}(x))^2.
\]
Then
\begin{eqnarray*}
\pp'\{\lambda|\eta(x,\omega')|\ge 1\}\le {\textrm{Var}}\,(\lambda|\eta(x,\omega')|)=\lambda^2 \sum_{k=1}^{\infty} f^2_k(x)\le \lambda^2 C_f.
\end{eqnarray*}
Take $\lambda_0>0$ such that $\lambda_0^2 C_f\le 1/64$.

Theorem 2.3 of~\cite{kahane} states the following fact for each fixed $x$. If
\[
\pp'\{\lambda|\eta(x,\omega')|\ge r\}\le a/2
\]
then
\[
\pp'\{\lambda|\eta(x,\omega')|\ge 2r\}\le a^2.
\]
We have that
\[
\pp'\{\lambda_0|\eta(x,\omega')|\ge 1\}\le 2^{-6},
\]
therefore
\[
\pp'\{\lambda_0|\eta(x,\omega')|\ge 2^k\}\le 2^{-2^{k+2}-2}.
\]
The L\'{e}vy inequality for symmetric variables (see, for example, \cite[Lemma~2.3]{kahane}) implies that for any $x\in\xx,\ c>0$ for
\[
\zeta(x,\omega')=\sup_{n\ge 1}\Bigl|\sum_{k=1}^{n} \varepsilon_{k}(\omega') f_k(x)\Bigr|
\]
holds
\begin{equation*}
\pp'[\zeta(x, \omega')> c]\le 2\pp'[|\eta(x, \omega')|> c].
\end{equation*}
Therefore,
\[
\pp'\{\lambda_0 \zeta(x,\omega') \ge 2^k\}\le 2^{-2^{k+2}-1} .
\]

We get
\begin{eqnarray*}
\ee_{\pp'}2^{\lambda_0 \zeta(x,\omega') }\le 2\pp'\{{\lambda_0 \zeta(x,\omega') }\le 2\}
+\sum_{k=1}^{\infty} 2^{2^{k+1}}\pp'\{2^k\le {\lambda_0 \zeta(x,\omega') }\le 2^{k+1}\}\\
\le 2+ \sum_{k=1}^{\infty} 2^{2^{k+1}}\pp'\{{\lambda_0 \zeta(x,\omega') }\ge 2^{k}\}\le 2+ \sum_{k=1}^{\infty} 2^{2^{k+1}} 2^{-2^{k+2}-1}<3.
\end{eqnarray*}

By the Fubini -- Tonelli theorem,
\[
\ee_{\pp'}\int_\xx2^{\lambda_0 zeta(x,\omega')}\,d\mm(x)=\int_\xx\ee_{\pp'}2^{\lambda_0 \zeta(x,\omega') }\,d\mm(x)\le 3\mm(\xx)<+\infty.
\]

Therefore,
\begin{equation*}
\int_\xx 2^{\lambda_0 \zeta(x,\omega') }\,d\mm(x) <+\infty\quad \pp'-\textrm{a.~s.}
\end{equation*}
and Assumption~A\ref{assexp} implies that $\zeta(x, \omega')$ is integrable w.~r.~t.~$\mu$ $\pp'$-a.~s.

For all $j\ge 1$
\[
\Bigl|\sum_{k=1}^{j} \varepsilon_{k}(\omega') f_k(x)\Bigr|\le \zeta(x, \omega').
\]
For each $\omega'$ (excluding a set of zero $\pp'$-measure) we use the dominated convergence theorem \cite[Theorem 1.5]{radbook} for the integral w.~r.~t.~$\mu$ and obtain
\begin{equation*}
\int_\xx \eta(x, \omega') \,d\mu(x)=\sum_{k=1}^{\infty} \varepsilon_{k}(\omega')\int_\xx
f_k(x)\,d\mu(x),
\end{equation*}
where the last series converges in probability~$\pp$. Using this, it is easy to obtain that this series converges in probability~$\pp\times\pp'$ on~$\Omega\times\Omega'$, and its partial sums are bounded in probability~$\pp\times\pp'$.

Further, define $\xi_k(\omega)=\int_{\xx} f_k\,d\mu$. Suppose \eqref{eqinfk} fails.  Then
\begin{equation}\label{eq:exde}
\exists \delta_{0}>0\ \forall c>0\ \exists m_c\ge 1:\ \pp\Bigl[\sum_{k=1}^{m_c}
\xi_k^2(\omega)\ge c\Bigr]\ge\delta_{0} \, .
\end{equation}

Applying~\eqref{eq:pzcr} for $\lambda_{k}=\xi_k(\omega)$, and \eqref{eq:exde} obtain
\[
\pp'\Bigl[\omega':\
\Bigl(\sum_{k=1}^{m_c} \varepsilon_{k}(\omega')\xi_k(\omega)\Bigr)^2\ge \frac{c}{4}\Bigr]\ge\frac{1}{8}
\]
for each fixed $\omega\in{\Omega}_c=\Bigl\{\sum_{k=1}^{m_c}\xi_k^2\ge c\Bigr\}$, and $\pp({\Omega}_c)\ge\delta_{0}$. Integrating over the set~${\Omega}_c$, we get
\begin{equation*}
\pp\times\pp'\Bigl[(\omega, \omega'):\ \Bigl(\sum_{k=1}^{m_c}
\varepsilon_{k}(\omega')\xi_k(\omega)\Bigr)^2\ge\frac{c}{4}\Bigr] \ge\frac{\delta_0}{8}\, .
\end{equation*}

Thus, there exist $\omega_0'$ such that
\[
\pp\Bigl[\omega:\ \Bigl(\sum_{k=1}^{m_c} \varepsilon_{k}(\omega_0') \xi_k(\omega)
\Bigr)^2\ge\frac{c}{4}\Bigr]\ge\frac{\delta_{0}}{8}\,
\]
and $\zeta(x, \omega_0')$ is integrable w.r.t. $\mu$.
For the function
$ {g}(x)=\sum_{k=1}^{m_c} \varepsilon_{k}(\omega_0')f_k(x)$ we have
\begin{equation*}
| {g}(x)|\le \zeta(x, \omega_0'),\quad \pp\Bigl[\Bigl|\int_{\xx}{g}\,d\mu\Bigr|\ge
\frac{\sqrt{c}}{2}\Bigr]\ge\frac{\delta_{0}}{8}\, .
\end{equation*}
Recall that $\delta_{0}>0$ is fixed and $c$ is arbitrary. Therefore, we obtain a contradiction with \eqref{eq:limf}.
\end{proof}

\begin{example} We give an example, where we can check Assumption~A\ref{assexp}, and can not check~A\ref{asssq}.

Let $\varepsilon_{k},\ k\ge 1$, be independent Bernoulli random variables. Consider an SM on Borel subsets on $[0,1]$
\[
\mu(\aaa)=\sum_{k=1}^\infty \dfrac{\varepsilon_{k}}{k^{4/3}}\int_{\aaa}x^{k^{-1/3}-1}\,dx.
\]
This series converges a.s. for each $\aaa\in\bb((0,1])$ because
\[
\sum_{k=1}^\infty \Bigl(\dfrac{1}{k^{4/3}}\int_{\aaa}x^{k^{-1/3}-1}\,dx\Bigr)^2<\infty,
\]
and $\mu$ is an SM with values in $\llf_2(\Omega,\ff,\pp)$.

For any measurable function $f:[0,1]\to\rr$ such that
\begin{equation*}
\sum_{k=1}^\infty \Bigl(\dfrac{1}{k^{4/3}}\int_{[0,1]}|f(x)|x^{k^{-1/3}-1}\,dx\Bigr)^2<\infty
\end{equation*}
$f$ is integrable w.r.t. $\mu$, and holds
\begin{equation}\label{eq:serf}
\int_{\aaa} f\,d\mu = \sum_{k=1}^\infty \dfrac{\varepsilon_{k}}{k^{4/3}}\int_{\aaa}f(x)x^{k^{-1/3}-1}\,dx.
\end{equation}
This fact is obvious for simple function $f$. For arbitrary measurable $f$, we take simple $f_n$ that  converge to $f$ pointwise, $|f_n(x)|\le |f(x)|$. Then
\begin{eqnarray*}
\sum_{k=1}^\infty \Bigl(\dfrac{1}{k^{4/3}}\int_{\aaa}(f(x)-f_n(x))x^{k^{-1/3}-1}\,dx\Bigr)^2\to 0,\quad n\to\infty,\\
\Rightarrow \sum_{k=1}^\infty \dfrac{\varepsilon_{k}}{k^{4/3}}\int_{\aaa}f_n(x)x^{k^{-1/3}-1}\,dx\stackrel{\llf_2}{\to}
\sum_{k=1}^\infty \dfrac{\varepsilon_{k}}{k^{4/3}}\int_{\aaa}f(x)x^{k^{-1/3}-1}\,dx,\quad n\to\infty
\end{eqnarray*}
for each $\aaa\in\bb([0,1])$. Theorem~1.8~\cite{radbook} implies that $f$ is integrable w.r.t. $\mu$, and \eqref{eq:serf} holds.

For measurable $f$ such that $\int_{[0,1]}2^{\lambda |f(x)|} \,dx<\infty$, we will obtain that \eqref{eq:serf} fulfills. Applying the H\"{o}lder inequality get
\begin{equation}\label{eq:hold}
\begin{split}
\int_{[0,1]}|f(x)|x^{k^{-1/3}-1}\,dx
\le \Bigl(\int_{[0,1]}|f(x)|^{2k^{1/3}-1}\,dx\Bigr)^{\frac{1}{2k^{1/3}-1} } \Bigl(\int_{[0,1]}x^{(1/2)k^{-1/3}-1}\,dx\Bigr)^{\frac{2k^{1/3}-2}{2k^{1/3}-1} }\\
\le 2k^{1/3} \Bigl(\int_{[0,1]}|f(x)|^{2k^{1/3}-1}\,dx\Bigr)^{\frac{1}{2k^{1/3}-1}}.
\end{split}
\end{equation}
It is easy to calculate that
\begin{equation}\label{eq:diff}
|f(x)|^{2k^{1/3}-1}\le C_{k,\lambda} 2^{\lambda |f|},\quad {\textrm{where}}\quad
C_{k,\lambda}=\Bigl(\frac{2k^{1/3}-1}{\lambda\ln 2}\Bigr)^{2k^{1/3}-1} 2^{-\frac{2k^{1/3}-1}{\ln 2}}.
\end{equation}
We get
\begin{eqnarray*}
\sum_{k=1}^\infty \Bigl(\dfrac{1}{k^{4/3}}\int_{\aaa}|f(x)|x^{k^{-1/3}-1}\,dx\Bigr)^2
\stackrel{\eqref{eq:hold}}{\le} 4 \sum_{k=1}^\infty \dfrac{1}{k^{2}} \Bigl(\int_{[0,1]}|f(x)|^{2k^{1/3}-1}\,dx\Bigr)^{\frac{2}{2k^{1/3}-1}}\\
\stackrel{\eqref{eq:diff}}{\le} C \sum_{k=1}^\infty \dfrac{1}{k^{2}} \Bigl(\int_{[0,1]}\Bigl(\frac{2k^{1/3}-1}{\lambda\ln 2}\Bigr)^{2k^{1/3}-1} 2^{-\frac{2k^{1/3}-1}{\ln 2}} 2^{\lambda |f|} \,dx\Bigr)^{\frac{2}{2k^{1/3}-1}}\\
\le C \sum_{k=1}^\infty \dfrac{1}{k^{4/3}}  \Bigl(\int_{[0,1]} 2^{\lambda |f|} \,dx\Bigr)^{\frac{2}{2k^{1/3}-1}}<\infty,
\end{eqnarray*}
where $C$ denotes a positive constant. Therefore, $f$ is integrable w.r.t. $\mu$.

Thus, Assumption~A\ref{assexp} holds for $\mm=\mm_L$. At the same time, it is not clear how we can check A\ref{asssq} for our~$\mu$. For the Lebesgue measure $\mm_L$ or $\mm(\aaa)=\int_{\aaa} x^\alpha\,dx$, $-1<\alpha<0$ we can find $f(x)=x^\beta$, $\beta<0$, such that $f^2$ is integrable to $\mm$ but some elements of sum in~\eqref{eq:serf} are not defined. Investigation of all finite $\mm$ on $\bb([0,1])$ looks difficult.
\hfill$\square$
\end{example}

The following statements will be used in Section~\ref{sc:four} for the series expansion of SMs.

\begin{lemma}\label{lm:sumf}
Let Assumption~A\ref{assexp} holds. Then the set of random variables
\[
\Bigl\{\sum_{k=1}^{j} \Bigl(\int_{\xx} f_k\,d\mu\Bigr)^2\ \Bigr|\ f_k:{\xx}\to{\rr}\ {\rm are\ measurable},\ \sum_{k=1}^{j} f_k^2(x)\le 1,\ j\ge 1\Bigr\}
\]
is bounded in probability.
\end{lemma}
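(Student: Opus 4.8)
The plan is to derive boundedness in probability of this family from Lemma~\ref{lm:fkmu} together with the uniform-tail estimate~\eqref{eq:limf}, essentially by repeating the contradiction argument in the proof of Lemma~\ref{lm:fkmu} but tracking uniformity in the choice of the $f_k$. First I would set up the same auxiliary Bernoulli sequence $\varepsilon_k$ on $(\Omega',\ff',\pp')$ and, for a given admissible system $(f_k)_{k=1}^j$ with $\sum_{k=1}^j f_k^2(x)\le 1$, form $\zeta(x,\omega')=\sup_{1\le n\le j}\bigl|\sum_{k=1}^n\varepsilon_k(\omega')f_k(x)\bigr|$. The key observation is that the constant $C_f=\sup_x\sum_k f_k^2(x)\le 1$ is uniform over the whole family, so a single $\lambda_0>0$ with $\lambda_0^2\le 1/64$ works for every admissible system. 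Running exactly the same chain of estimates as in Lemma~\ref{lm:fkmu} (Paley--Zygmund, Kahane's Theorem~2.3, the L\'evy inequality, summation of the tail), I get $\ee_{\pp'}2^{\lambda_0\zeta(x,\omega')}\le 3$ for every $x$, hence by Fubini--Tonelli $\int_\xx 2^{\lambda_0\zeta(x,\omega')}\,d\mm(x)\le 3\mm(\xx)<\infty$ for $\pp'$-a.e.\ $\omega'$, and Assumption~A\ref{assexp} makes $\zeta(\cdot,\omega')$ integrable w.r.t.\ $\mu$ for $\pp'$-a.e.\ $\omega'$ --- all of this with bounds that do not depend on the particular admissible $(f_k)$.

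Next I would argue by contradiction. If the family were not bounded in probability, there would be $\delta_0>0$ such that for every $c>0$ we can choose an admissible system $(f_k^{(c)})_{k=1}^{j_c}$ with $\pp\bigl[\sum_{k=1}^{j_c}\xi_k^2\ge c\bigr]\ge\delta_0$, where $\xi_k=\int_\xx f_k^{(c)}\,d\mu$. Applying the Paley--Zygmund inequality~\eqref{eq:pzcr} with $\lambda_k=\xi_k(\omega)$ for $\omega$ in the set where $\sum\xi_k^2\ge c$, then integrating over that set exactly as in the proof of Lemma~\ref{lm:fkmu}, I obtain
\[
\pp\times\pp'\Bigl[(\omega,\omega'):\ \Bigl(\sum_{k=1}^{j_c}\varepsilon_k(\omega')\xi_k(\omega)\Bigr)^2\ge\frac{c}{4}\Bigr]\ge\frac{\delta_0}{8},
\]
so there is an $\omega_0'$ (which may be chosen so that simultaneously $\zeta(\cdot,\omega_0')$ is $\mu$-integrable, since that holds for a.e.\ $\omega_0'$) with
$\pp\bigl[\bigl|\int_\xx g_c\,d\mu\bigr|\ge\sqrt{c}/2\bigr]\ge\delta_0/8$ for $g_c(x)=\sum_{k=1}^{j_c}\varepsilon_k(\omega_0')f_k^{(c)}(x)$, and $|g_c(x)|\le\zeta(x,\omega_0')$.

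The delicate point --- and the main obstacle --- is that~\eqref{eq:limf} controls $\sup_{|h|\le|g|}\pp\{|\int h\,d\mu|\ge c\}$ for a \emph{single fixed} integrable dominating function $g$, whereas here each $g_c$ is bounded by $\zeta(\cdot,\omega_0')$ with an $\omega_0'$ that a priori depends on $c$. To fix this I would not re-choose $\omega_0'$ for each $c$: instead, note that the set $\{\omega':\zeta(\cdot,\omega')\text{ is }\mu\text{-integrable}\}$ has $\pp'$-measure $1$ independently of $c$, and for each such $\omega'$ we get $\int_\xx 2^{\lambda_0\zeta(x,\omega')}\,d\mm(x)<\infty$; pick one such $\omega_0'$ once and for all, set $g=\zeta(\cdot,\omega_0')$, which is a fixed integrable function. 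A Borel--Cantelli / diagonal argument over $c=1,2,\dots$ then shows that for \emph{this} fixed $\omega_0'$ we still have, for infinitely many $c$, the inequality $\pp\times\pp'[\cdots]\ge\delta_0/8$ conditioned on $\omega'=\omega_0'$ holding with probability at least $\delta_0/16$ in $\omega$, say --- more carefully, integrate the Paley--Zygmund event only over the product of $\Omega_c$ with the good set of $\omega'$, whose $\pp'$-measure is $1$, to conclude that for a.e.\ $\omega'$ (in particular for our fixed $\omega_0'$) and for every $c$ there is an admissible $g_c$ with $|g_c|\le g$ and $\pp\{|\int g_c\,d\mu|\ge\sqrt c/2\}\ge\delta_0/8$. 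Since $g$ is a fixed integrable function and $|g_c|\le g$, letting $c\to\infty$ contradicts~\eqref{eq:limf}. This completes the argument; the only genuinely new bookkeeping compared to Lemma~\ref{lm:fkmu} is ensuring the dominating function $g$ is chosen independently of $c$, which is possible precisely because all the exponential-moment bounds above are uniform over the admissible family.
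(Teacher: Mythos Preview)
Your approach has a genuine gap at precisely the point you flag as delicate, and your attempted fix does not close it. The function $\zeta(x,\omega')=\sup_{1\le n\le j_c}\bigl|\sum_{k=1}^n\varepsilon_k(\omega')f_k^{(c)}(x)\bigr|$ depends on the admissible system $(f_k^{(c)})_{k=1}^{j_c}$, not only on $\omega'$. Hence when you ``pick one such $\omega_0'$ once and for all, set $g=\zeta(\cdot,\omega_0')$, which is a fixed integrable function'', the function $g$ is \emph{not} fixed: you really have a family $\zeta^{(c)}(\cdot,\omega_0')$, one for each $c$. Each of them is $\mu$-integrable (your argument does secure that), but~\eqref{eq:limf} requires a \emph{single} integrable dominant $g$ bounding all the $g_c$ simultaneously, and you never produce one. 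The uniform exponential-moment bound $\ee_{\pp'}2^{\lambda_0\zeta^{(c)}(x,\omega')}\le 3$ does not by itself translate into a tail control on $\int_\xx h\,d\mu$ over $|h|\le \zeta^{(c)}(\cdot,\omega_0')$ that is uniform in $c$. (A secondary issue: your claim that ``for a.e.\ $\omega'$ \dots\ and for every $c$'' the section probability is at least $\delta_0/8$ is also unjustified; Fubini only gives, for each $c$, a set of $\omega'$ of $\pp'$-measure $\ge$ some positive constant, and these sets vary with $c$. This part is repairable via $\limsup_c A_c$, but the dominating-function problem is not.)

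The paper's proof sidesteps all of this by applying Lemma~\ref{lm:fkmu} as a black box rather than reopening its proof. If the family were unbounded in probability, pick for each $n\ge 1$ an admissible system $(f_{kn})_{k=1}^{j_n}$ with $\pp\bigl\{\sum_{k=1}^{j_n}\bigl(\int_\xx f_{kn}\,d\mu\bigr)^2>2^n\bigr\}>\delta_0$. Scale and concatenate: the collection $\{2^{-n/2}f_{kn}:n\ge 1,\ 1\le k\le j_n\}$ satisfies $\sum_{n,k}(2^{-n/2}f_{kn}(x))^2\le\sum_n 2^{-n}=1$, yet the $n$-th block of $\sum_{n,k}\bigl(\int_\xx 2^{-n/2}f_{kn}\,d\mu\bigr)^2$ exceeds $1$ with probability $>\delta_0$ for every $n$, so the full series cannot converge a.s., contradicting Lemma~\ref{lm:fkmu}. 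This geometric scaling is exactly what manufactures the single system (and hence, inside Lemma~\ref{lm:fkmu}, the single dominating function) that your argument was missing.
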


\begin{proof} If the statement fails, for some $\delta_{0}>0$ and all $n\ge 1$ we can find functions $f_{kn}$, $1\le k\le j_n$, such that
\[
\sum_{k=1}^{j_n} f_{kn}^2(x)\le 1,\quad {\pp}\Bigl\{\sum_{k=1}^{j_n} \Bigl(\int_{\xx} f_{kn}\,d\mu\Bigr)^2>2^n\Bigr\}>\delta_{0}.
\]
Then
\[
\sum_{n=1}^{\infty}\sum_{k=1}^{j_n} \bigl(2^{-n/2}f_{kn}(x)\bigr)^2\le 1,\quad \sum_{n=1}^{\infty}\sum_{k=1}^{j_n} \Bigl(\int_{\xx} 2^{-n/2}f_{kn}\,d\mu\Bigr)^2\ {\rm does\ not\ converge},
\]
which contradicts Lemma~\ref{lm:fkmu}. \end{proof}

\begin{lemma}\label{lminmd}
Let Assumption~A\ref{assexp} holds, $\mu$ be an SM on $\bb([0,T])$, and the process $\mu(t)=\mu((0,t])$, $0\le t\le T$ have continuous paths. Then for any $T_1$, $0<T_1<T$ we have
\[
\int_{[0,T_1]}\frac{\bigl|\mu(s+\varepsilon)-\mu(s)\bigr|^{3}}{\varepsilon}\,ds\stackrel{\pp}{\to} 0,\quad \varepsilon\to 0+.
\]
\end{lemma}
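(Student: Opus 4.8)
The plan is to split off a single factor $|\mu(s+\varepsilon)-\mu(s)|$ from the cube: this factor goes to $0$ uniformly in $s$ by continuity of the paths, while the remaining quadratic term is only bounded in probability, and the product of something a.s.\ small and something bounded in probability converges to $0$ in probability. Concretely, I would fix $0<\varepsilon<T-T_1$ (so that all the values $\mu(s+\varepsilon)$, $0\le s\le T_1$, are defined) and write
\[
\int_{[0,T_1]}\frac{|\mu(s+\varepsilon)-\mu(s)|^{3}}{\varepsilon}\,ds\le A_\varepsilon\, Y_\varepsilon,\qquad A_\varepsilon:=\sup_{0\le s\le T_1}|\mu(s+\varepsilon)-\mu(s)|,\quad Y_\varepsilon:=\int_{[0,T_1]}\frac{|\mu(s+\varepsilon)-\mu(s)|^{2}}{\varepsilon}\,ds .
\]
Since $\mu(\cdot)$ is continuous on the compact $[0,T]$, hence uniformly continuous, $A_\varepsilon\to 0$ a.s.\ as $\varepsilon\to0+$. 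It then suffices to show that $\{Y_\varepsilon:0<\varepsilon<T-T_1\}$ is bounded in probability: given $\eta,\gamma>0$, pick $C$ with $\sup_\varepsilon\pp\{Y_\varepsilon>C\}<\gamma/2$, then $\varepsilon_0$ with $\pp\{A_\varepsilon>\eta/C\}<\gamma/2$ for $\varepsilon<\varepsilon_0$, and conclude $\pp\{A_\varepsilon Y_\varepsilon>\eta\}<\gamma$ for such $\varepsilon$.

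To bound the $Y_\varepsilon$ I would discretize. For $0<\delta\le\varepsilon$ put $N_\delta=\lfloor T_1/\delta\rfloor$ and consider the Riemann sum $S_{\delta,\varepsilon}=\frac{\delta}{\varepsilon}\sum_{i=0}^{N_\delta}\bigl|\mu((i\delta,i\delta+\varepsilon])\bigr|^{2}$. Because $s\mapsto|\mu(s+\varepsilon)-\mu(s)|^{2}$ is continuous on $[0,T_1]$, $S_{\delta,\varepsilon}\to Y_\varepsilon$ a.s.\ as $\delta\to0+$. On the other hand, for $0\le i\le N_\delta$ set $f_i=(\varepsilon/\delta+1)^{-1/2}\,\ii_{(i\delta,i\delta+\varepsilon]}$; each point of $[0,T]$ lies in at most $\varepsilon/\delta+1$ of the intervals $(i\delta,i\delta+\varepsilon]$, so $\sum_i f_i^2(x)\le 1$ for every $x$, and Lemma~\ref{lm:sumf} applies: the random variable $W_{\delta,\varepsilon}:=\sum_{i=0}^{N_\delta}\bigl(\int_\xx f_i\,d\mu\bigr)^{2}=(\varepsilon/\delta+1)^{-1}\sum_{i=0}^{N_\delta}\bigl|\mu((i\delta,i\delta+\varepsilon])\bigr|^{2}$ lies in the family that is bounded in probability, and this bound is independent of $\delta$ and $\varepsilon$. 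Since $S_{\delta,\varepsilon}=(1+\delta/\varepsilon)W_{\delta,\varepsilon}\le 2W_{\delta,\varepsilon}$, all the Riemann sums are uniformly controlled.

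Finally I would pass to the limit: taking $\delta_n\downarrow0$, $Y_\varepsilon=\lim_n S_{\delta_n,\varepsilon}\le\liminf_n 2W_{\delta_n,\varepsilon}$ a.s., so up to a null set $\{Y_\varepsilon>C\}\subseteq\liminf_n\{2W_{\delta_n,\varepsilon}>C\}$, and by Fatou's lemma
\[
\pp\{Y_\varepsilon>C\}\le\liminf_n\pp\{2W_{\delta_n,\varepsilon}>C\}\le\sup\bigl\{\pp\{2V>C\}:V\text{ in the family of Lemma~\ref{lm:sumf}}\bigr\},
\]
and the right-hand side tends to $0$ as $C\to+\infty$, uniformly in $\varepsilon$, since that family is bounded in probability. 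This gives the desired uniform bound on $\{Y_\varepsilon\}$ and hence the lemma. The step I expect to be the main obstacle is exactly this last transfer: the estimate $S_{\delta,\varepsilon}\le 2W_{\delta,\varepsilon}$ holds only for each fixed $\delta$, so one has to let $\delta\to0$ while keeping everything uniform in $\varepsilon$, which is what the $\liminf$/Fatou argument achieves; the bounded-overlap count $\sum_i f_i^2\le1$ and the continuity of the paths (for Riemann-sum convergence and for $A_\varepsilon\to0$) are the remaining ingredients.
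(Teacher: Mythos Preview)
Your proof is correct and follows essentially the same approach as the paper: factor $|\mu(s+\varepsilon)-\mu(s)|^{3}=|\mu(s+\varepsilon)-\mu(s)|\cdot|\mu(s+\varepsilon)-\mu(s)|^{2}$, use uniform continuity for the first factor, and control the quadratic integral via Riemann sums whose terms are rewritten as $\bigl(\int f_i\,d\mu\bigr)^2$ with $\sum_i f_i^2\le 1$, so that Lemma~\ref{lm:sumf} gives a uniform-in-$\varepsilon$ bound in probability. The paper uses the slightly different partition $s_{kn}=(kT_1\varepsilon/n)\wedge T_1$ and is terser about passing from the Riemann sums to the integral; your explicit $\liminf$/Fatou argument makes that passage rigorous, but the underlying idea is the same.
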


\begin{proof}
We have that
\begin{equation}\label{eqinmu}
\int_{[0,T_1]}\frac{\bigl|\mu(s+\varepsilon)-\mu(s)\bigr|^{3}}{\varepsilon}\,ds\le
\sup_s\bigl|\mu(s+\varepsilon)-\mu(s)\bigr| \int_{[0,T_1]}\frac{\bigl|\mu(s+\varepsilon)-\mu(s)\bigr|^{2}}{\varepsilon}\,ds.
\end{equation}

For any $n\ge 1$ take the partition of $[0,T_1]$ by points $s_{kn}=\Bigl(\frac{k}{n}T_1\varepsilon\Bigr)\wedge T_1$, $0\le k\le j_n$, and consider the Riemann integral sum for the last integral in~\eqref{eqinmu}
\begin{equation}\label{eqriems}
\begin{split}
\sum_{k=1}^{j_n} \frac{\bigl|\mu(s_{kn}+\varepsilon)-\mu(s_{kn})\bigr|^{2}}{\varepsilon}\,\frac{T_1\varepsilon}{n}
 =T_1\sum_{k=1}^{j_n} \Bigl(\int_{[0,T_1]} f_{kn}\,d\mu\Bigr)^2,\\
 {\rm where}  \quad f_{kn}(x)=\frac{1}{\sqrt{n}}\,{\bf 1}_{(s_{kn},s_{kn}+\varepsilon]}(x).
 \end{split}
\end{equation}
We have $\sum_{k=1}^{j_n} f_{kn}^2(x)\le 1$, by Lemma~\ref{lm:sumf} set of sums~\eqref{eqriems} is bounded in probability. For continuous $\mu(s)$, $\sup_s\bigl|\mu(s+\varepsilon)-\mu(s)\bigr|\to 0$ as $\varepsilon\to 0$, and this implies the statement of Lemma~\ref{lminmd}.
\end{proof}

\section{Besov regularity of SM in $[0,1]^d$}\label{sc:besovr}

Now we will consider SM $\mu$ defined on the Borel $\sigma$-algebra of~$[0, 1]^d$, $d\ge 1$ and obtain the Besov regularity of $\mu$ with continuous realizations.

For $x=(x_1, x_2, \dots, x_d)\in[0, 1]^d$ set
\[
\mu(x)=\mu\Bigl(\prod_{i=1}^{d}[0, x_i]\Bigr).
\]

By $e^{(i)}$ we denote the $i$th coordinate unit vector in~$\rr^d$, and consider discrete sets
\begin{eqnarray*}
U(n, i)=\Bigl\{y= \Bigl( \dfrac{k_1}{2^n}, \dfrac{k_2}{2^n}, \dots, \dfrac{k_d}{2^n}\Bigr)\ \Big|\
k_j=0,\ 1,\ \dots,\ 2^n,\
 1\le j\le d; \ y+2^{-n} e^{(i)} \in [0, 1]^d \Bigr\}.
\end{eqnarray*}
By Corollary~3.3~\cite{kamont}, if for each $i$, $1\le i\le d$, for respective fixed $\omega\in\Omega$ it holds
\begin{equation}
\label{eqsudu2} \sum_{n=1}^{\infty}2^{n(\alpha p-d)}\sum_{y\in U(n, i)} |\mu(y + 2^{-n}e^{(i)})-\mu(y) |^p<+\infty.
\end{equation}
then continuous path of $\mu(x)$ belong to $B^\alpha_{p,p}([0, 1]^d)$. Applying this result we prove the following statement.

\begin{thm}
\label{thnbspd} Let Assumption~A\ref{assexp} holds, and the random function $\mu(x),\ x\in[0,1]^d$, have continuous realizations. Then for any $1\le p<+\infty,\ 0<\alpha<\min\{1/p,1/2\}$ the realization $\mu(x)$, $x\in[0,1]^d$ with probability 1 belongs to the Besov space~$B^\alpha_{p,p}([0,1]^d)$.
\end{thm}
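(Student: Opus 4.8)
The plan is to verify the sufficient condition \eqref{eqsudu2} from Corollary~3.3 of \cite{kamont} for each coordinate direction $i$, $1\le i\le d$, almost surely. Fix such an $i$. The key observation is that each increment $\mu(y+2^{-n}e^{(i)})-\mu(y)$ is an integral of an indicator function against $\mu$: indeed, $\mu(y+2^{-n}e^{(i)})-\mu(y)=\int_{[0,1]^d}g_{n,y}\,d\mu$, where $g_{n,y}$ is the indicator of the box $\prod_{j\ne i}[0,y_j]\times(y_i,y_i+2^{-n}]$. These boxes, as $y$ ranges over $U(n,i)$ for fixed $n$, form a family whose overlap is controlled: at a point $x$, the number of $y\in U(n,i)$ with $g_{n,y}(x)=1$ is at most $(2^n+1)^{d-1}\cdot 1$ in the "bad" directions times $1$ in direction $i$ (only one dyadic interval of length $2^{-n}$ starting at a dyadic point contains a given $x_i$), so $\sum_{y\in U(n,i)}g_{n,y}^2(x)\le (2^n+1)^{d-1}\le C\,2^{n(d-1)}$.

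Next I would reduce the $p$-th power sum to a sum of squares so that Lemma~\ref{lm:fkmu} applies. Since $0<\alpha<1/2$ we have $2\alpha<1$, and since $0<\alpha<1/p$ the condition $\alpha p<1$ holds; the point of the constraint $p<2$ versus $p\ge 2$ must be handled by an interpolation/Hölder step. Concretely, for $p\le 2$ one uses that $\sum|a_k|^p\le(\sum a_k^2)^{p/2}\cdot(\#\text{terms})^{1-p/2}$ on each dyadic level, or more simply bounds $|a_k|^p$ by a combination; for $p\ge 2$ one instead bounds $\sum|a_k|^p\le(\max|a_k|)^{p-2}\sum a_k^2$ and uses continuity of paths to control $\max_y|\mu(y+2^{-n}e^{(i)})-\mu(y)|$, exactly as in Lemma~\ref{lminmd}. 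In either case the task becomes: show that $\sum_{n\ge1}2^{n(\alpha p-d)}\cdot(\text{level-}n\text{ quantity built from }\sum_{y}(\int g_{n,y}\,d\mu)^2)<\infty$ a.s. Rescaling the functions on level $n$ by $2^{-n(d-1)/2}$ makes $\sum_y(2^{-n(d-1)/2}g_{n,y})^2(x)\le C$ uniformly in $x$ and in $n$; concatenating over $n$ after a further summable weight $2^{-n\beta}$ with suitable $\beta>0$ keeps the total supremum over $x$ of the sum of squares finite, so Lemma~\ref{lm:fkmu} yields $\sum_n 2^{-n\beta}\cdot 2^{n(d-1)}\sum_{y\in U(n,i)}(\int g_{n,y}\,d\mu)^2<\infty$ a.s.

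Finally I would check the bookkeeping of exponents. Writing $S_n:=\sum_{y\in U(n,i)}|\mu(y+2^{-n}e^{(i)})-\mu(y)|^p$, we want $\sum_n 2^{n(\alpha p-d)}S_n<\infty$. For $p\le 2$, Jensen/Hölder on the (at most $C2^{nd}$) terms gives $S_n\le (C2^{nd})^{1-p/2}\bigl(\sum_y|\mu(y+2^{-n}e^{(i)})-\mu(y)|^2\bigr)^{p/2}$, and Lemma~\ref{lm:fkmu} applied after the rescaling above shows $\sum_y|\mu(y+2^{-n}e^{(i)})-\mu(y)|^2\le C2^{n(d-1)}\cdot R_n$ where $\sum_n 2^{-n\beta}R_n<\infty$ a.s. for every $\beta>0$; substituting and collecting powers of $2^n$ leaves an exponent of the form $\alpha p-d+d(1-p/2)+\tfrac p2(d-1)+\tfrac p2\beta=\alpha p-\tfrac p2+\tfrac p2\beta$, which is negative for small $\beta$ precisely because $\alpha<1/2$; the extra factor $R_n^{p/2}$ is harmless since $R_n$ grows subexponentially. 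For $p\ge 2$ one replaces the Jensen step by $S_n\le(\max_y|\cdot|)^{p-2}\sum_y|\cdot|^2$ and uses that $\max_y|\mu(y+2^{-n}e^{(i)})-\mu(y)|\to0$ by uniform continuity, which supplies the needed negative power; here the constraint $\alpha<1/p$ is what makes $\alpha p-d+\ldots<0$. The main obstacle is exactly this exponent accounting — arranging the rescaling in Lemma~\ref{lm:fkmu} and the Hölder split so that the $d$-dependence cancels and only $\alpha p-\min\{1,p/2\}\cdot(\text{something})<0$ survives — together with justifying that the single application of Lemma~\ref{lm:fkmu} to the concatenated (over all $n$) family of rescaled indicators is legitimate, since that is the one place where Assumption~A\ref{assexp} (rather than A\ref{asssq}) enters.
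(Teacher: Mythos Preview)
Your approach is fundamentally the same as the paper's: express the dyadic increments as integrals of indicator functions with controlled overlap and invoke Lemma~\ref{lm:fkmu}. The paper's execution for $p\ge 2$ is considerably more direct, however. Rather than introducing an auxiliary weight $2^{-n\beta}$ and arguing via subexponential growth of an intermediate quantity $R_n$, the paper simply absorbs the \emph{target} weight into the test functions, setting $f_{n,y}=2^{n(\alpha p-d)/2}h_{n,y}$; the overlap count then gives
\[
\sum_{n\ge 1,\,y\in U(n,i)}f_{n,y}^2(x)\le\sum_{n\ge 1}2^{n(\alpha p-d)}\cdot 2^{n(d-1)}=\sum_{n\ge 1}2^{n(\alpha p-1)}<\infty
\]
directly from $\alpha p<1$, and a single application of Lemma~\ref{lm:fkmu} yields \eqref{eq:srvy2}. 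The reduction from $|\cdot|^p$ to $|\cdot|^2$ then uses only that continuous paths on $[0,1]^d$ are \emph{bounded}; your remark that ``$\max\to 0$ \dots supplies the needed negative power'' is not quite right, since convergence to zero without a rate cannot produce an exponential factor --- fortunately none is needed. For $1\le p<2$ the paper simply cites the argument from \cite{radt09}, so your self-contained H\"older reduction is in a sense more informative here, though the ``$R_n^{p/2}$ is harmless since $R_n$ grows subexponentially'' step would need to be written out with care (in particular, one must intersect over countably many $\beta>0$ to get the subexponential bound almost surely).
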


\begin{proof}
Firstly, consider the case  $2\le p<+\infty$, then \mbox{$0<\alpha<1/p$}. To obtain~\eqref{eqsudu2}, it is sufficient, for each~$i$, to prove the convergence of the series
\begin{equation}\label{eq:srvy2}
\begin{split}
\sum_{n=1}^{\infty}2^{ n (\alpha p-d)}\sum_{y\in U(n,i)} |\mu (y + 2^{-n} e^{(i)})- \mu (y) |^2
=\sum_{n\ge 1,y\in U(n,i)} \Bigl(2^{n (\alpha p-d)/2}\int_{[0,1]^d}h_{n, y} (x)\,d\mu (x)\Bigr)^2,
\end{split}
\end{equation}
where
\[
h_{n, y}(x)=\ii_{\{x_j\le{k_j}/{2^n},\ j\ne i,\ {k_i}/{2^n}<x_i\le{(k_i+1)}/{2^n}\}}(x),\quad y= \Bigl( \dfrac{k_1}{2^n},\ \dfrac{k_2}{2^n},\ \dots,\ \dfrac{k_d}{2^n}\Bigr).
\]

Here, for each $n$, each fixed~$x$ belongs to at most $2^{n(d-1)}$ sets from the indicators in the functions~$h_{n,y}$. Therefore,
\begin{equation*}
\begin{split}
\sum_{n\ge 1,y\in U(n,i)} (2^{ n (\alpha p-d)/2}h_{n, y}(x))^2
 \le  \sum_{n=1}^{\infty} 2^{ n (\alpha p-d)} 2^{n(d-1)}
 =  \sum_{n=1}^{\infty} 2^{ n (\alpha p-1)}<+\infty.
 \end{split}
\end{equation*}
Lemma~\ref{lm:fkmu} implies that series~\eqref{eq:srvy2} converge a.~s.

For $1\le p<2$, we can repeat the proof of this case from Theorem~5.1~\cite{radt09} (or Theorem~2.2~\cite{radbook}), and obtain that \eqref{eq:srvy2} coverge for $\alpha=1/2$. \end{proof}

Note that, for paths of multidimensional integral w.r.t. SM, sufficient conditions of Besov regularity are given in~\cite[Theorem~5]{manrad24}. Now we see that we can change A\ref{asssq} to A\ref{assexp} in that statement because we can refer to our Theorem~\ref{thnbspd} instead of  Theorem~5.1~\cite{radt09} in the proof of Theorem~5~\cite{manrad24}.

\section{Fourier expansion of SM}\label{sc:four}

Let $\mu$ be  an SM on $\bb([0,2\pi])$. Consider the random process $\mu(t)=\mu((0,t])$, $0\le t\le 2\pi$.

Assume that paths of the process $\mu(t)$ are Riemann integrable on $[0,2\pi]$. Consider the Fourier series defined by $\mu(t)$ for each fixed $\omega\in\Omega$:
\begin{eqnarray}
\xi_k= \frac{1}{\pi} \int_{[0,2\pi]}\mu(s) \cos k s \,ds,\quad
\eta_k= \frac{1}{\pi} \int_{[0,2\pi]}\mu(s) \sin k s \,ds,\label{gaxiet}\\
\mu(t)\sim \frac{\xi_0}{2}+\sum_{k=1}^{\infty} (\xi_k\cos kt+\eta_k\sin kt).\label{eq:series}
\end{eqnarray}
Set
\begin{equation*}
S_n(t)=\frac{\xi_0}{2}+\sum_{k=1}^n (\xi_k\cos kt+\eta_k\sin kt).
\end{equation*}

Applying the integration by parts formula for integrals in~\eqref{gaxiet} (see~\cite[Lemma~1]{radt18} or~\cite[Lemma 2.7]{radbook}), we get
\begin{equation}\label{eq:dfex}
\begin{split}
\xi_k  = -\frac{1}{k\pi} \int_{(0,2\pi]}\sin ks \,d\mu,\quad \eta_k=\frac{1}{k\pi}\int_{(0,2\pi]}(\cos ks-1)
\,d\mu,\quad k\ge 1,\\
\xi_0  = 2\mu((0,2\pi])-\frac{1}{\pi} \int_{(0,2\pi]}s \,d\mu,
\end{split}
\end{equation}
and these integrals are defined for any SM $\mu$ without the assuming Riemann integrability.

In the sequel, we will assume that  $\xi_k$ and $\eta_k$ are defined by~\eqref{eq:dfex}, an consider the Fourier series for arbitrary SM $\mu$ on~$\bb([0,2\pi])$. First, we prove that series \eqref{eq:series} converge.

\begin{thm} \label{thcoas} If Assumption~A\ref{assexp} holds then
\[
\pp\bigl[S_n(t)\ \textrm{converges }\mm_L\textrm{-a.s.\ on\ }[0,2\pi]\bigr]=1.
\]
\end{thm}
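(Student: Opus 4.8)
The plan is to realize the partial sums $S_n(t)$ as a stochastic integral against $\mu$ of a suitable family of deterministic functions, and then apply the classical Carleson--Hunt theorem pathwise together with Lemma~\ref{lm:fkmu} to control the random Fourier coefficients. First I would substitute the expressions~\eqref{eq:dfex} for $\xi_k,\eta_k$ into $S_n(t)$ and collect terms, writing
\[
S_n(t)=\int_{(0,2\pi]} g_{n,t}(s)\,d\mu(s),\qquad
g_{n,t}(s)=2\,\ii_{(0,2\pi]}(s)\cdot\tfrac12 - \tfrac{s}{2\pi} + \frac1\pi\sum_{k=1}^n\frac{1}{k}\bigl(-\sin ks\cos kt + (\cos ks - 1)\sin kt\bigr),
\]
which after the product-to-sum identities becomes $g_{n,t}(s)=$ (a fixed piece independent of $n$) plus $\frac1\pi\sum_{k=1}^n \frac{\sin k(t-s)}{k}$. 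The key analytic input is that the partial sums of the Fourier series of the $2\pi$-periodic sawtooth function $\frac1\pi\sum_{k\ge 1}\frac{\sin k\theta}{k}$ are \emph{uniformly bounded} in $n$ and $\theta$ (a classical fact, essentially the statement that the conjugate Dirichlet kernel has uniformly bounded partial integrals); hence $\sup_{n}\sup_{s}|g_{n,t}(s)|\le C$ for every fixed $t$, and in fact $\sup_n\sup_{t,s}|g_{n,t}(s)|\le C$.

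Next I would fix a probability-one event on which the path $s\mapsto\mu(s)$ is bounded (automatic, since $\mu(s)$ is the path of an SM on a compact interval and such paths are a.s.\ bounded — this is where Lemma~\ref{lm:sumf}/Lemma~\ref{lm:fkmu}-type boundedness enters, or one may invoke~\eqref{eq:limf} directly). On that event the original integrals~\eqref{gaxiet} defining $\xi_k(\omega),\eta_k(\omega)$ make sense as Riemann--Lebesgue integrals, and the sequence $(\xi_k(\omega),\eta_k(\omega))$ is just the ordinary Fourier coefficient sequence of the bounded (hence $\llf_2[0,2\pi]$) function $\mu(\cdot,\omega)$. By the Carleson--Hunt theorem, for $\mm_L$-a.e.\ $t$ the classical Fourier series of $\mu(\cdot,\omega)$ converges, i.e.\ $S_n(t,\omega)\to$ something, for a.e.\ $t$. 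Then Fubini gives $\pp\bigl[S_n(t)$ converges $\mm_L$-a.s.$\bigr]=1$, provided one first checks that the two definitions of $\xi_k,\eta_k$ — via~\eqref{gaxiet} and via~\eqref{eq:dfex} — agree a.s., which is exactly the content of the integration-by-parts lemma cited before the theorem (\cite[Lemma~1]{radt18}). So in fact no new SM-specific machinery beyond boundedness of paths is strictly required here.

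If instead one wants to avoid Carleson--Hunt and stay closer to the SM toolkit (and to use Lemma~\ref{lm:fkmu} essentially), the alternative route is: show $\sum_{k\ge1}(\xi_k^2+\eta_k^2)<\infty$ a.s.\ by applying Lemma~\ref{lm:fkmu} to the functions $f_k(s)=-\tfrac{1}{k\pi}\sin ks$, $\tilde f_k(s)=\tfrac{1}{k\pi}(\cos ks - 1)$ — indeed $\sum_k(f_k(s)^2+\tilde f_k(s)^2)\le \tfrac{C}{\pi^2}\sum_k k^{-2}<\infty$ uniformly in $s$, so~\eqref{eq:sumf} holds and~\eqref{eqinfk} yields $\sum_k(\xi_k^2+\eta_k^2)<\infty$ a.s.; then on that full-measure event the sequence of Fourier coefficients lies in $\ell^2$, so it is the coefficient sequence of some $\llf_2$ function $F_\omega$, and again Carleson--Hunt for that $F_\omega$ gives a.e.-$t$ convergence. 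Either way, the main obstacle is purely analytic and classical: justifying the a.e.\ convergence of the Fourier partial sums of an $\llf_2$ function, which forces an appeal to Carleson--Hunt (there is no softer substitute, since we only have an $\ell^2$ bound on the coefficients, not $\ell^1$ or uniform decay). Assembling the measure-theoretic wrapper via Fubini and matching the two coefficient definitions are the routine steps.
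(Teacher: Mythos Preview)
Your ``alternative route'' in the last paragraph is exactly the paper's proof: apply Lemma~\ref{lm:fkmu} to $f_k(s)=-\tfrac{1}{\pi k}\sin ks$ and $\tilde f_k(s)=\tfrac{1}{\pi k}(\cos ks-1)$ (condition~\eqref{eq:sumf} holds since $\sum_k k^{-2}<\infty$), conclude $\sum_k(\xi_k^2+\eta_k^2)<\infty$ a.s., and then invoke Carleson's theorem on the resulting $\ell^2$ coefficient sequence. That argument is complete and correct.

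Your first route, however, has a genuine gap. In Theorem~\ref{thcoas} there is \emph{no} hypothesis of continuity or even Riemann integrability of the paths $t\mapsto\mu((0,t])$; the coefficients $\xi_k,\eta_k$ are taken in the sense of~\eqref{eq:dfex}, not~\eqref{gaxiet}, precisely so that the statement makes sense for an arbitrary SM. Your claim that the paths are a.s.\ bounded is not justified: neither~\eqref{eq:limf} (which concerns boundedness \emph{in probability} of a family of stochastic integrals, not pathwise boundedness of $t\mapsto\mu((0,t])$) nor Lemma~\ref{lm:sumf} delivers this, and Assumption~A\ref{assexp} alone does not imply it. So you cannot, in general, identify $\xi_k(\omega),\eta_k(\omega)$ with the classical Fourier coefficients of a bounded function $\mu(\cdot,\omega)$ and appeal directly to Carleson--Hunt on that function. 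The clean way around this is exactly what you do in the alternative route: bypass the path entirely, get $\ell^2$ coefficients from Lemma~\ref{lm:fkmu}, manufacture an $L^2$ function by Riesz--Fischer, and apply Carleson to \emph{that}.
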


\begin{proof}
Applying Lemma~\ref{lm:fkmu} for
\[
f_k=\dfrac{\sin kt}{\pi k}\quad \textrm{and}\quad f_k=\dfrac{\cos kt-1}{\pi k},
\]
using~\eqref{eq:dfex}, we get $\sum_k(\xi_k^2+\eta_k^2)<\infty$ a.~s. Applying the famous Carleson's theorem (see~\cite{carles}) we obtain our statement.
\end{proof}

Further, we prove that, under some assumptions, $S_n$ converge to values of $\mu$.

Representations of processes in the form of random series began with the well-known Paley--Wiener expansion of the Wiener process. A similar representation of the fractional Brownian motion was obtained in~\cite{dzhzan}. We give such a result for continuous paths of SMs.

The following statement gives a generalization of the Dirichlet -- Jordan theorem about Fourier series expansion of functions of bounded variation, see, for example, \cite[Theorem II.8.1]{zygmund}.

\begin{thm}
\label{th:convs}
Let Assumption~A\ref{assexp} holds, and paths of the process $\mu(t)$, $0\le t\le 2\pi$, be continuous. Then for any $t\in (0,2\pi)$ it holds $S_n(t)\rp\mu(t)$ and
$S_n(0)=S_n(2\pi)\rp\mu(2\pi)/2$ as $n\to\infty$.
\end{thm}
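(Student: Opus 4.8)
The plan is to reduce the statement about $S_n(t)$ to a classical Fourier-convergence result applied pathwise, with the key work being a uniform (in $t$ and $n$) control of the partial sums that lets us pass a limit through the probability.

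\medskip
\textbf{Setup and strategy.} First I would recall that for a continuous, hence Riemann-integrable, path $\mu(\cdot)$, the coefficients $\xi_k,\eta_k$ defined by~\eqref{eq:dfex} coincide $\pp$-a.s.\ with the classical Fourier coefficients~\eqref{gaxiet} of the function $s\mapsto\mu(s)$, by the integration-by-parts formula cited before~\eqref{eq:dfex}. The Dirichlet--Jordan theorem (\cite[Theorem II.8.1]{zygmund}) says that for a function of bounded variation the Fourier partial sums converge at every point to the average of the one-sided limits; at a point of continuity this is just the function value. A continuous path of $\mu$ need not have bounded variation, so the plan is instead to combine two ingredients: (i) pointwise a.s.\ convergence of $S_n(t)$ for a.e.\ $t$, already available from Theorem~\ref{thcoas}, and more importantly (ii) a deterministic estimate, for each fixed continuous function $g$, that $S_n(t)\to g(t)$ at every point of continuity of $g$ \emph{provided} one has a suitable bound on $\sum_k(\xi_k^2+\eta_k^2)$ or on the Fourier coefficients. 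The cleanest route: show that $S_n(t)$ can be written as a single stochastic integral $\int_{(0,2\pi]}D_{n,t}\,d\mu$ against an explicit kernel built from the Dirichlet kernel, and then exploit that this kernel, after integration by parts is undone, is close to an indicator of $(0,t]$.

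\medskip
\textbf{Main steps.} I would proceed as follows. (1) Using~\eqref{eq:dfex}, write $S_n(t)=\int_{(0,2\pi]}\varphi_{n,t}(s)\,d\mu(s)$ where
\[
\varphi_{n,t}(s)=\frac{t}{\pi}-\frac{s}{\pi}+\frac1\pi\sum_{k=1}^n\frac{1}{k}\bigl(\cos k(t-s)-\cos ks\bigr)\cdot 2,
\]
or rather the exact expression obtained by plugging $\xi_k,\eta_k$ into $S_n(t)$ and collecting terms; the point is that $\varphi_{n,t}$ is a fixed continuous function of $s$ for each $n,t$. (2) Recognize that $\varphi_{n,t}(s)$ is exactly the $n$-th partial sum of the Fourier series of the sawtooth-type function $\mathbf 1_{(0,t]}(s)$ (shifted by a constant), because termwise $S_n(t)=\int_{(0,2\pi]}\bigl(\text{partial Fourier sum of }\mathbf 1_{(0,t]}\bigr)d\mu$ — this is precisely the duality between the Fourier series of $\mu(\cdot)$ evaluated at $t$ and the Fourier series of $\mathbf 1_{(0,t]}$ integrated against $d\mu$. (3) For $t\in(0,2\pi)$ the function $\mathbf 1_{(0,t]}$ has bounded variation, so its Fourier partial sums $\varphi_{n,t}$ are uniformly bounded in $s$ and $n$ (uniform boundedness of partial sums of a BV function follows from the Dirichlet kernel estimate) and converge boundedly to $\mathbf 1_{(0,t]}(s)$ for every $s\neq t$. (4) Now invoke~\eqref{eq:limf}: since $|\varphi_{n,t}|\le M_t$ uniformly and $\varphi_{n,t}\to\mathbf 1_{(0,t]}$ pointwise off the single point $s=t$ (which is $\mm$-null if $\mm\{t\}=0$; if not, a small perturbation of $t$ handles it, or one uses that $\mu(\{t\})=0$ in probability by $\sigma$-additivity and continuity of paths), the dominated convergence theorem for the integral w.r.t.\ $\mu$ (\cite[Theorem 1.5]{radbook}) gives $\int\varphi_{n,t}\,d\mu\rp\int\mathbf 1_{(0,t]}\,d\mu=\mu((0,t])=\mu(t)$. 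For the endpoint, $\mathbf 1_{(0,2\pi]}$ has its jump at $0\equiv 2\pi$, so the partial sums converge to the midpoint $1/2$ there, yielding $S_n(0)=S_n(2\pi)\rp\mu(2\pi)/2$.

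\medskip
\textbf{The main obstacle.} The delicate point is step (3)--(4): I must verify that the Fourier partial sums of $\mathbf 1_{(0,t]}$, viewed as functions of $s$, are bounded \emph{uniformly in $n$} and converge to $\mathbf 1_{(0,t]}(s)$ in a dominated fashion — this is standard harmonic analysis (Dirichlet--Jordan plus the uniform bound $\sup_{n}\|S_n g\|_\infty<\infty$ for $g$ of bounded variation, which comes from $|D_n*g|\le \|g\|_{BV}\sup_N|\int_0^N D_n|\le C\|g\|_{BV}$), but the constant $M_t$ may blow up as $t\to 0$ or $t\to 2\pi$, which is exactly why those endpoints are treated separately. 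The other subtlety is the behavior at the single point $s=t$: one needs that $\mu$ assigns no mass there, for which I would use that a continuous-path SM satisfies $\mu(\{t\})=0$ $\pp$-a.s.\ (the increment over a shrinking interval around $t$ tends to $0$ both in probability by $\sigma$-additivity and a.s.\ by path continuity). Everything else is bookkeeping: assembling $\varphi_{n,t}$ explicitly from~\eqref{eq:dfex} and confirming it is the claimed partial Fourier sum of the indicator.
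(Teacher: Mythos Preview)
Your approach is correct in spirit and genuinely different from the paper's. The paper works pathwise with the modified partial sums $S_n^*(t)=\tfrac12(S_{n-1}+S_n)(t)$ and invokes Zygmund's pointwise estimate (Theorem~II.10.1) to bound $|S_n^*(t)-\mu(t)|$ by three integrals; two vanish by continuity, while the third is controlled via Lemma~\ref{lminmd}, which in turn rests on Lemma~\ref{lm:sumf} and hence on Assumption~A\ref{assexp}. Your route instead rewrites $S_n(t)=\int_{(0,2\pi]}g_{n,t}\,d\mu$ with $g_{n,t}(u)=\tfrac{1}{\pi}\int_u^{2\pi}D_n(s-t)\,ds$ (this is what actually drops out of~\eqref{eq:dfex}; it is not literally the $n$-th Fourier partial sum of $\mathbf 1_{(0,t]}$ as you claim in step~(2), but it is an integrated Dirichlet kernel), observes that these kernels are uniformly bounded in $n,u$ by the classical bound $\sup_{n,x}\bigl|\int_0^x D_n\bigr|<\infty$ and converge pointwise to $\mathbf 1_{(0,t]}(u)$ for $u\neq t$ (to $\tfrac12$ for $u\in(0,2\pi)$ when $t\in\{0,2\pi\}$), and then applies the dominated convergence theorem for $\int\cdot\,d\mu$ together with $\mu(\{t\})=0$ a.s.\ from path continuity.

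What each approach buys: the paper's argument showcases Lemma~\ref{lminmd} (and thus the role of A\ref{assexp}), which is of independent interest for other applications. Your argument is more elementary and, notably, \emph{does not use Assumption~A\ref{assexp} at all}---only continuity of paths, boundedness of integrated Dirichlet kernels, and the dominated convergence theorem for stochastic integrals. So you in fact obtain a strictly stronger statement than Theorem~\ref{th:convs}. Two small points to tidy: your explicit formula for $\varphi_{n,t}$ in step~(1) is garbled (just compute it cleanly from~\eqref{eq:dfex}, yielding $1-\tfrac{s}{2\pi}+\tfrac{1}{\pi}\sum_{k\le n}\tfrac{1}{k}(\sin k(t-s)-\sin kt)$), and the worry that $M_t$ blows up near the endpoints is unfounded---the bound $\sup_{n,x}|\sum_{k=1}^n\tfrac{\sin kx}{k}|<\infty$ makes $M_t$ uniform in $t$ as well.
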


\begin{proof}
Without loss of generality, we may assume that $\mu((0,2\pi])=0$, and the periodic continuation of
$\mu(t)$ to~$\rr$ is continuous. Otherwise, we can consider the SM
\[
\widetilde{\mu}(\aaa)=\mu(\aaa)-\mm_L(\aaa)\dfrac{\mu((0,2\pi])}{2\pi}.
\]

Set
\begin{eqnarray*}
S_n^*(t) = \frac{1}{2}(S_{n-1}(t)+S_n(t))=S_{n-1}(t)+\frac{1}{2}(\xi_n\cos nt+\eta_n\sin nt),\\
\varphi_t(s) = \frac{1}{2}(\mu(t+s)+\mu(t-s)-2\mu(t))=\frac{1}{2}\bigl(\mu((t,t+s])-\mu((t-s,t])\bigr).
\end{eqnarray*}
From Theorem~(II.10.1)~\cite{zygmund} it follows that for $h=\pi / n$ it holds
\begin{eqnarray*}
|S_n^*(t)-\mu(t)| \le \frac{1}{\pi}\int_{h}^{\pi}\frac{|\varphi_t(s)-\varphi_t(s+h)|}{s}\,ds
+h \int_{h}^{\pi}\frac{|\varphi_t(s)|}{s^2}\,ds
 + \frac{2}{h} \int_0^{2h}{|\varphi_t(s)|}\,ds+o(1)\\
:=I_1+I_2+I_3+o(1).
\end{eqnarray*}
Continuity of $\mu(t)$ and L'H\^{o}pital's rule give that $I_2, I_3\to 0$ as $h\to 0$ for each $t\in\rr$ and $\omega\in\Omega$. Using the H\"{o}lder inequality, we obtain
\begin{eqnarray*}
 2\pi I_1 = \int_{h}^{\pi}\frac{|\mu((t+s,t+s+h])-\mu((t-s-h,t-s])|}{s}\,ds\\
 \le  \Bigl( \int_{h}^{\pi}\frac{h^{1/2}}{s^{3/2}}\,ds\Bigr)^{\frac{2}{3}} \Bigl(\int_{h}^{\pi}\frac{|\mu((t+s,t+s+h])-\mu((t-s-h,t-s])|^3}{h}\,ds\Bigr)^{\frac{1}{3}}.
\end{eqnarray*}
The last value tends to zero in probability as $h\to 0$ by Lemma~\ref{lminmd}.

Thus, $S_n^*(t)\rp\mu(t)$. By the analogue of the Lebesgue dominated convergence theorem (see~\cite[Proposition 7.1.1]{kwawoy} or~\cite[Theorem 1.5]{radbook}),
 $\xi_n,\ \eta_n\rp 0$, and therefore $S_n(t)\rp\mu(t)$.
\end{proof}

\begin{remark} Also, Lemma~\ref{lm:sumf} of our paper admits to change Assupmtion~A\ref{asssq} to Assumption~A\ref{assexp} in Theorem~3.1~2)~\cite{radavsm19}, where, for an equation driven by SM, the rate of the convergence in the averaging principle is established. This is obvious from the proof of that theorem.
\end{remark}


\bibliographystyle{bib/vmsta-mathphys}
\bibliography{RadchenkoRegulSM24arxiv}

\begin{thebibliography}{24}
\ifx \bisbn   \undefined \def \bisbn  #1{ISBN #1}\fi
\ifx \binits  \undefined \def \binits#1{#1}\fi
\ifx \bauthor  \undefined \def \bauthor#1{#1}\fi
\ifx \batitle  \undefined \def \batitle#1{#1}\fi
\ifx \bjtitle  \undefined \def \bjtitle#1{#1}\fi
\ifx \bvolume  \undefined \def \bvolume#1{\textbf{#1}}\fi
\ifx \byear  \undefined \def \byear#1{#1}\fi
\ifx \bissue  \undefined \def \bissue#1{#1}\fi
\ifx \bfpage  \undefined \def \bfpage#1{#1}\fi
\ifx \blpage  \undefined \def \blpage #1{#1}\fi
\ifx \burl  \undefined \def \burl#1{\textsf{#1}}\fi
\ifx \doiurl  \undefined \def \doiurl#1{\textsf{#1}}\fi
\ifx \betal  \undefined \def \betal{\textit{et al.}}\fi
\ifx \binstitute  \undefined \def \binstitute#1{#1}\fi
\ifx \binstitutionaled  \undefined \def \binstitutionaled#1{#1}\fi
\ifx \bctitle  \undefined \def \bctitle#1{#1}\fi
\ifx \beditor  \undefined \def \beditor#1{#1}\fi
\ifx \bpublisher  \undefined \def \bpublisher#1{#1}\fi
\ifx \bbtitle  \undefined \def \bbtitle#1{#1}\fi
\ifx \bedition  \undefined \def \bedition#1{#1}\fi
\ifx \bseriesno  \undefined \def \bseriesno#1{#1}\fi
\ifx \blocation  \undefined \def \blocation#1{#1}\fi
\ifx \bsertitle  \undefined \def \bsertitle#1{#1}\fi
\ifx \bsnm \undefined \def \bsnm#1{#1}\fi
\ifx \bsuffix \undefined \def \bsuffix#1{#1}\fi
\ifx \bparticle \undefined \def \bparticle#1{#1}\fi
\ifx \barticle \undefined \def \barticle#1{#1}\fi
\ifx \bconfdate \undefined \def \bconfdate #1{#1} \fi
\ifx \botherref \undefined \def \botherref #1{#1} \fi
\ifx \url \undefined \def \url#1{\textsf{#1}} \fi
\ifx \bchapter \undefined \def \bchapter#1{#1} \fi
\ifx \bbook \undefined \def \bbook#1{#1} \fi
\ifx \bcomment \undefined \def \bcomment#1{#1} \fi
\ifx \oauthor \undefined \def \oauthor#1{#1} \fi
\ifx \citeauthoryear \undefined \def \citeauthoryear#1{#1} \fi
\ifx \endbibitem  \undefined \def \endbibitem {}\fi
\ifx \bconflocation  \undefined \def \bconflocation#1{#1} \fi
\ifx \arxivurl  \undefined \def \arxivurl#1{\textsf{#1}} \fi
\csname PreBibitemsHook\endcsname

\bibitem{bai22}
\begin{barticle}
\bauthor{\bsnm{Bai}, \binits{S.}}:
\batitle{Limit theorems for conservative flows on multiple stochastic
  integrals}.
\bjtitle{J. Theor. Probab.}
\bvolume{35},
\bfpage{917}--\blpage{948}
(\byear{2022})
\end{barticle}
\endbibitem

\bibitem{baiowwa20}
\begin{barticle}
\bauthor{\bsnm{Bai}, \binits{S.}},
\bauthor{\bsnm{Owada}, \binits{T.}},
\bauthor{\bsnm{Wang}, \binits{Y.}}:
\batitle{A functional non-central limit theorem for multiple-stable processes
  with long-range dependence}.
\bjtitle{Stoch. Proc. Appl.}
\bvolume{130},
\bfpage{5768}--\blpage{5801}
(\byear{2020})
\end{barticle}
\endbibitem

\bibitem{bodnarchuk20}
\begin{barticle}
\bauthor{\bsnm{Bodnarchuk}, \binits{I.}}:
\batitle{Averaging principle for a stochastic cable equation}.
\bjtitle{Modern Stoch. Theory Appl.}
\bvolume{7},
\bfpage{449}--\blpage{467}
(\byear{2020})
\end{barticle}
\endbibitem

\bibitem{carles}
\begin{barticle}
\bauthor{\bsnm{Carleson}, \binits{L.}}:
\batitle{On convergence and growth of partial sums of {F}ourier series}.
\bjtitle{Acta Mathematica}
\bvolume{116},
\bfpage{135}--\blpage{157}
(\byear{1966})
\end{barticle}
\endbibitem

\bibitem{cikero}
\begin{barticle}
\bauthor{\bsnm{Ciesielski}, \binits{Z.}},
\bauthor{\bsnm{Kerkyacharian}, \binits{G.}},
\bauthor{\bsnm{Roynette}, \binits{B.}}:
\batitle{Quelques espaces fonctionnels associ{\'e}s {\`a} des processus
  gaussiens}.
\bjtitle{Studia Math.}
\bvolume{107},
\bfpage{171}--\blpage{204}
(\byear{1993})
\end{barticle}
\endbibitem

\bibitem{dzhzan}
\begin{barticle}
\bauthor{\bsnm{Dzhaparidze}, \binits{K.}},
\bauthor{\bparticle{van} \bsnm{Zanten}, \binits{H.}}:
\batitle{Krein's spectral theory and the {P}aley--{W}iener expansion for
  fractional {B}rownian motion}.
\bjtitle{Ann. Probab.}
\bvolume{33},
\bfpage{620}--\blpage{644}
(\byear{2005})
\end{barticle}
\endbibitem

\bibitem{fageot17}
\begin{barticle}
\bauthor{\bsnm{Fageot}, \binits{J.}},
\bauthor{\bsnm{Unser}, \binits{M.}},
\bauthor{\bsnm{Ward}, \binits{J.P.}}:
\batitle{On the {B}esov regularity of periodic {L}\'{e}vy noises}.
\bjtitle{Appl. Comput. Harmon. Anal.}
\bvolume{42},
\bfpage{21}--\blpage{36}
(\byear{2017})
\end{barticle}
\endbibitem

\bibitem{herren}
\begin{barticle}
\bauthor{\bsnm{Herren}, \binits{V.}}:
\batitle{L{\'e}vy-type processes and {B}esov spaces}.
\bjtitle{Potential Anal.}
\bvolume{7},
\bfpage{689}--\blpage{704}
(\byear{1997})
\end{barticle}
\endbibitem

\bibitem{kahane}
\begin{bbook}
\bauthor{\bsnm{Kahane}, \binits{J.-P.}}:
\bbtitle{Some Random Series of Functions}.
\bpublisher{Cambridge Univ. Press},
\blocation{Cambridge}
(\byear{1993})
\end{bbook}
\endbibitem

\bibitem{kamont}
\begin{barticle}
\bauthor{\bsnm{Kamont}, \binits{A.}}:
\batitle{A discrete characterization of {B}esov spaces}.
\bjtitle{Approx. Theory Appl. (N.S.)}
\bvolume{13},
\bfpage{63}--\blpage{77}
(\byear{1997})
\end{barticle}
\endbibitem

\bibitem{kwawoy}
\begin{bbook}
\bauthor{\bsnm{Kwapie\'{n}}, \binits{S.}},
\bauthor{\bsnm{Woyczy\'{n}ski}, \binits{W.A.}}:
\bbtitle{Random Series and Stochastic Integrals: Single and Multiple}.
\bpublisher{Birkh\"{a}user},
\blocation{Boston}
(\byear{1992})
\end{bbook}
\endbibitem

\bibitem{manikin24}
\begin{botherref}
\oauthor{\bsnm{Manikin}, \binits{B.}}:
Heat equation with a general stochastic measure in a bounded domain.
Modern Stoch. Theory Appl.,
1--22
(2024).
doi:\doiurl{10.15559/24-VMSTA262}
\end{botherref}
\endbibitem

\bibitem{manrad24}
\begin{botherref}
\oauthor{\bsnm{Manikin}, \binits{B.}},
\oauthor{\bsnm{Radchenko}, \binits{V.}}:
Sample path properties of multidimensional integral with respect to stochastic
  measure.
Modern Stoch. Theory Appl.,
1--17
(2024).
doi:\doiurl{10.15559/24-VMSTA256}
\end{botherref}
\endbibitem

\bibitem{ondver20}
\begin{barticle}
\bauthor{\bsnm{Ondrej{\'a}t}, \binits{M.}},
\bauthor{\bsnm{Veraar}, \binits{M.}}:
\batitle{On temporal regularity of stochastic convolutions in $2$-smooth
  {B}anach spaces}.
\bjtitle{Ann. Inst. H. Poincar{\'e} Probab. Statist.}
\bvolume{56},
\bfpage{1792}--\blpage{1808}
(\byear{2020})
\end{barticle}
\endbibitem

\bibitem{rads06}
\begin{barticle}
\bauthor{\bsnm{Radchenko}, \binits{V.}}:
\batitle{Besov regularity of stochastic measures}.
\bjtitle{Statist. Probab. Lett.}
\bvolume{77},
\bfpage{822}--\blpage{825}
(\byear{2007})
\end{barticle}
\endbibitem

\bibitem{radt09}
\begin{barticle}
\bauthor{\bsnm{Radchenko}, \binits{V.}}:
\batitle{Sample functions of stochastic measures and {B}esov spaces}.
\bjtitle{Theory Probab. Appl.}
\bvolume{54},
\bfpage{160}--\blpage{168}
(\byear{2010})
\end{barticle}
\endbibitem

\bibitem{radt18}
\begin{barticle}
\bauthor{\bsnm{Radchenko}, \binits{V.}}:
\batitle{Fourier series expansion of stochastic measures}.
\bjtitle{Theory Probab. Appl.}
\bvolume{63},
\bfpage{318}--\blpage{326}
(\byear{2018})
\end{barticle}
\endbibitem

\bibitem{radavsm19}
\begin{barticle}
\bauthor{\bsnm{Radchenko}, \binits{V.}}:
\batitle{Averaging principle for equation driven by a stochastic measure}.
\bjtitle{Stochastics}
\bvolume{91}(\bissue{6}),
\bfpage{905}--\blpage{915}
(\byear{2019})
\end{barticle}
\endbibitem

\bibitem{radbook}
\begin{bbook}
\bauthor{\bsnm{Radchenko}, \binits{V.}}:
\bbtitle{General Stochastic Measures. Integration, Path Properties, and
  Equations}.
\bpublisher{ISTE Ltd},
\blocation{London}
(\byear{2022})
\end{bbook}
\endbibitem

\bibitem{samtaq}
\begin{bbook}
\bauthor{\bsnm{Samorodnitsky}, \binits{G.}},
\bauthor{\bsnm{Taqqu}, \binits{M.S.}}:
\bbtitle{Stable Non-Gaussian Random Processes}.
\bpublisher{Chapman and Hall},
\blocation{London}
(\byear{1994})
\end{bbook}
\endbibitem

\bibitem{shen20}
\begin{barticle}
\bauthor{\bsnm{Shen}, \binits{G.}},
\bauthor{\bsnm{Wu}, \binits{J.-L.}},
\bauthor{\bsnm{Yin}, \binits{X.}}:
\batitle{Averaging principle for fractional heat equations driven by stochastic
  measures}.
\bjtitle{Appl. Math. Lett.}
\bvolume{106},
\bfpage{106404}
(\byear{2020})
\end{barticle}
\endbibitem

\bibitem{vahtar}
\begin{bbook}
\bauthor{\bsnm{Vakhania}, \binits{N.N.}},
\bauthor{\bsnm{Tarieladze}, \binits{V.I.}},
\bauthor{\bsnm{Chobanian}, \binits{S.A.}}:
\bbtitle{Probability Distributions on Banach Spaces}.
\bpublisher{D. Reidel Publishing Co.},
\blocation{Dordrecht}
(\byear{1987})
\end{bbook}
\endbibitem

\bibitem{coupond24}
\begin{barticle}
\bauthor{\bsnm{\v{C}oupek}, \binits{P.}},
\bauthor{\bsnm{Ondrej{\'a}t}, \binits{M.}}:
\batitle{{B}esov--{O}rlicz path regularity of non-{G}aussian processes}.
\bjtitle{Potential Anal.}
\bvolume{60},
\bfpage{307}--\blpage{339}
(\byear{2024})
\end{barticle}
\endbibitem

\bibitem{zygmund}
\begin{bbook}
\bauthor{\bsnm{Zygmund}, \binits{A.}}:
\bbtitle{Trigonometric Series. 3rd Ed.}
\bpublisher{Cambridge Univ. Press},
\blocation{Cambridge}
(\byear{2002})
\end{bbook}
\endbibitem

\end{thebibliography}

\end{document}